\newtheorem{thm}{\bf Theorem}[section]
\newtheorem{prop}[thm]{\bf Proposition}
\newtheorem{lemma}[thm]{\bf Lemma}
\newtheorem{cor}[thm]{\bf Corollary}
\theoremstyle{definition}
\newtheorem{definition}[thm]{\bf Definition}
\theoremstyle{remark}
\newtheorem{remark}[thm]{\bf Remark}
\newtheorem{question}[thm]{\bf Question}
\newtheorem{example}[thm]{\bf Example}
\numberwithin{equation}{section}
\DeclareMathOperator{\depth}{{depth}}
\DeclareMathOperator{\grade}{{grade }}
\DeclareMathOperator{\Supp}{{Supp}}
\DeclareMathOperator{\Ass}{{Ass}}
\DeclareMathOperator{\Tor}{{Tor}}
\DeclareMathOperator{\ord}{{ord}}
\DeclareMathOperator{\len}{{length}}
\def\f0{\mathbf{0}}
\def\fp{\mathbf{p}}
\def\fm{\mathfrak{m}}
\def\fp{\mathfrak{p}}
\newcommand{\ses}[3]{0 \to {#1} \to {#2} \to {#3} \to 0}
\begin{document}

\title[On colon operations]{On colon operations and special types of ideals}

\author[Hailong Dao]{Hailong Dao}
\address{Hailong Dao\\ Department of Mathematics \\ University of Kansas\\405 Snow Hall, 1460 Jayhawk Blvd.\\ Lawrence, KS 66045}
\email{hdao@ku.edu}

 \begin{abstract}
 We record a general asymptotic formula for colon of ideals and proceed to give some applications  regarding $\fm$-full, weakly $\fm$-full, and full ideals.  
  
\end{abstract}
\keywords{Colon ideals, $\fm$-full ideals, full ideals, Burch ideals, superficial elements}
\subjclass[2010]{}

\maketitle

\section{Introduction}
Let $R$ be a commutative ring, $I$ an ideal and $M\subseteq N$ finitely generated modules. One purpose of this note is to record a formula of the form $$I^nM:_NJ = 0:_NJ+I^{n-1}M$$ for $n\gg 0$ and an ideal $J\subseteq I$ containing any ``general" generator of $I$. The precise and key local statement is Theorem \ref{mainthm}. 
We then give some corollaries. For instance if $I,J,K$ are $R$-ideals, then there is a number $t>0$ such that 
$$(J+I^nK):I = J:I +I^{n-1}K$$
for each $n\geq t$. See Corollaries \ref{maincor} and \ref{coloncor}. 

Although these results are not too hard to prove and some special forms of them are well-known to experts (see Remark \ref{BrR} and \cite{Br, HS, R}), we could not locate the most general versions  in the literature and found them rather convenient, thus it seems worth writing down.

Our main application (and original motivation) is to study the properties  $\{$$\fm$-full, weakly $\fm$-full, full$\}$ for ideals {\it asymptotically} in a local ring $(R,\fm)$. These type of ideals have recently attracted renewed attention for some remarkable homological properties, see Definition \ref{fulldef} and Remark \ref{fullref}. Using the results on colons we can quickly show that if $(P)$ is one of these properties, and $I$ satisfies $(P)$, then $I+K\fm^n$ is $(P)$ for $n\gg0$, see Theorem \ref{fullthm}. 

In the final section we deal with regular local rings of dimension two and give stronger versions of  the previous results. For instance, we give a precise condition for when $(I+J):x =I:x+J:x$ for a general $x\in \fm$ (Proposition \ref{dim2colon}) and apply it to show when the sum of two $\fm$-full ideals is $\fm$-full (Corollary \ref{dim2cor}). We show that certain invariants defined using the properties $\{$$\fm$-full, weakly $\fm$-full, full$\}$ and our stabilizing results coincide in this case. \\

\noindent\textbf{Acknowledgements}: We thank Neil Epstein and Craig Huneke for helpful comments and encouragements. We thank the anonymous referee who pointed out a mistake in our original proof of Lemma \ref{mainlem}. 

\section{General results on colon}

First we consider the local situation $(R,\fm,k)$. We say that $x\in I$ is a {\it general} element if the image of $x$ in $V=I/\fm I$ lies in a non-zero Zariski open subset $U$ of $V$.

\begin{lemma}\label{mainlem}
Let $(R,\fm,k)$ be a local ring with infinite residue field. Let $I$ be an ideal of $R$ and $M\subseteq N$ be finitely generated $R$-modules. Assume that $\grade(I,N)>0$. Then there is a number $t>0$ such that for a general element $x$ in $I$, we have

$$I^nM:_Nx = I^{n-1}M $$
for each $n\geq t$. 
\end{lemma}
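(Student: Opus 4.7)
The plan is to reformulate the colon equality as an injectivity statement on a finitely generated graded module over the Rees algebra $\R := R[It]$, and then close the argument with a graded associated-primes computation.

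First I would verify that a general $x \in I$ satisfies (i) $x$ is a nonzerodivisor on $N$; and (ii) $x$ is superficial for $I$ with respect to $N$. For (i), the hypothesis $\grade(I,N)>0$ combined with prime avoidance gives $\fp\cap I\subsetneq I$ for every $\fp \in \Ass(N)$, and then Nakayama forces $(\fp \cap I)+\fm I\subsetneq I$, so the image of $\fp \cap I$ in $I/\fm I$ is a proper $k$-subspace; with infinite residue field a general $x$ misses this finite collection of subspaces. Condition (ii) is also a Zariski-open condition on $I/\fm I$. The classical theorem on superficial nonzerodivisors then yields $I^n N :_N x = I^{n-1} N$ for all $n \gg 0$, so any $u \in I^n M :_N x$ automatically lies in $I^{n-1} N$ (since $xu \in I^n M \subseteq I^n N$).

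Next, set $E := \bigoplus_{n \geq 0} I^n N / I^n M$ as a graded $\R$-module. It is finitely generated, since the Rees modules $\bigoplus_n I^n N$ and $\bigoplus_n I^n M$ are $\R$-generated in degree zero by generating sets of $N$ and $M$. The previous step produces, for $n \gg 0$, an isomorphism
$$\frac{I^n M :_N x}{I^{n-1} M} \;\cong\; \bigl(0 :_E xt\bigr)_{n-1},$$
sending $u \mapsto \bar u \in E_{n-1}$, where $xt \in \R_1$ is the degree-one element corresponding to $x$; indeed, $xu \in I^n M$ translates to $xt \cdot \bar u = 0$ in $E_n$. So it suffices to show the graded submodule $K := 0 :_E xt$ of $E$ vanishes in all sufficiently high degrees.

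For this I would invoke graded associated primes. The set $\Ass_\R(E)$ is finite. Call a prime $\fP \in \Ass_\R(E)$ \emph{relevant} if it does not contain the irrelevant ideal $\R_+ := \bigoplus_{n \geq 1}\R_n$, and \emph{irrelevant} otherwise. Since $\R_+$ is generated as an ideal by $\R_1 \cong I$, a relevant $\fP$ has $\fP \cap I \subsetneq I$, which again by Nakayama descends to a proper $k$-subspace of $I/\fm I$. A general $x$ therefore satisfies $xt \notin \fP$ for every relevant $\fP \in \Ass_\R(E)$. Combined with $\Ass_\R(K) \subseteq \{\fP \in \Ass_\R(E) : xt \in \fP\}$, this shows every associated prime of $K$ contains $\R_+$, so $\R_+^s \cdot K = 0$ for some $s$. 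Since $K$ is finitely generated, this forces $K_m = 0$ for all sufficiently large $m$, yielding the desired $t$.

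The main conceptual hurdle is the identification of the colon quotient with a graded piece of $0 :_E xt$, which is what makes the Noetherian/associated-primes machinery applicable. The main technical point is the repeated graded-avoidance bookkeeping: verifying that the finitely many proper $R$-submodules of $I$ (arising from $\Ass(N)$, from superficiality, and from the relevant associated primes of $E$) all descend to proper $k$-subspaces of $I/\fm I$, so that the infinite residue field guarantees a single general $x$ satisfying all of the conditions at once.
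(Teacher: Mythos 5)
Your proof is correct, but it takes a genuinely different route from the paper's. Both proofs use the classical $M=N$ case (a superficial nonzerodivisor $x$ gives $I^nN:_Nx = I^{n-1}N$ for $n\gg0$) as a black box. The paper then reduces the general case directly to $M=N$: it shows $I^nM:_Nx\subseteq M$ for $n\gg0$ by setting $L=M:_Nx^\infty$ (so $L/M$ is $I$-torsion), noting $I^nM:_Nx\subseteq L\cap I^{n-1}N$, and applying Artin--Rees to conclude $L\cap I^{n-1}N\subseteq M$ once $n$ is large; after that, $I^nM:_Nx = I^nM:_Mx = I^{n-1}M$. You instead repackage the desired equality as vanishing in high degree of the graded piece $(0:_E xt)_{n-1}$ of the Rees module $E=\bigoplus_n I^nN/I^nM$, and finish with a graded associated-primes avoidance argument showing $\Ass_{\R}(0:_E xt)$ consists only of irrelevant primes, whence $\R_+^s$ kills $0:_E xt$ and its high-degree pieces vanish. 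Your approach is arguably more conceptual and systematizes the prime-avoidance bookkeeping in a single graded framework (and would adapt cleanly to other filtrations); the paper's argument is more elementary, invoking only Artin--Rees rather than the Rees-algebra machinery, and is somewhat shorter. One small bookkeeping remark: to get from ``every $\fP\in\Ass_{\R}(K)$ contains $\R_+$'' to ``$\R_+^s K=0$'' you implicitly use that $\Min\Supp_{\R}K\subseteq\Ass_{\R}K$ and that $\R_+$ is a finitely generated ideal, which is fine since $\R=R[It]$ is Noetherian; and to go from $\R_+^sK=0$ to eventual vanishing of $K_m$ you use that $\R_i=\R_1^{s-1}\R_{i-s+1}\subseteq\R_+^s$ for $i\ge s$, which holds because $\R$ is a standard graded Rees algebra. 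Both points are routine but worth making explicit.
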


\begin{proof}
The case $M=N$ is \cite[Lemma 8.5.3]{HS}. Thus, it is enough to show that for $n\gg0$, $I^nM:_Nx\subseteq M$, as then we would have $$I^nM:_Nx = I^nM:_Mx = I^{n-1}M$$

Let $\mathcal S$ be the set of associated primes of $N/M$ that are not in $\Supp(R/I)$. We can choose $x$ outside all primes in $\mathcal S$. Let $L=M:_Nx^{\infty}$, then $L/M$ is $I$-torsion. Clearly $I^nM:_Nx\subseteq L$. Also, by the case $M=N$, $I^nM:_Nx\subseteq I^{n-1}N$, so $I^nM:_Nx\subseteq L\cap  I^{n-1}N$. But by Artin-Rees Lemma, there is a constant $c$ so that if $n-1\geq c$
$$L\cap I^{n-1}N = I^{n-1-c}(L\cap I^cN)\subseteq I^{n-1-c}L \subseteq M$$
if $n-c$ is big enough.  
\end{proof}

\begin{thm}\label{mainthm}
Let $(R,\fm,k)$ be a local ring with infinite residue field. Let $I$ be an ideal of $R$ and $M\subseteq N$ be finitely generated $R$-modules. Then there is a number $t>0$ such that for a general element $x$ in $I$, given any ideal $J$ with $x\in J \subseteq I$ we have 

$$I^nM:_NJ = 0:_NJ+I^{n-1}M$$
for each $n\geq t$. 
\end{thm}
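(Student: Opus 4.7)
The containment $\supseteq$ is immediate, since $J(0:_N J)=0\subseteq I^n M$ and $J\cdot I^{n-1}M\subseteq I^n M$. For the reverse inclusion, because $x\in J$ one has $I^n M:_N J\subseteq I^n M:_N x$, so it will suffice to show
$$I^n M:_N x \;\subseteq\; 0:_N J + I^{n-1}M$$
for large $n$ and a suitably chosen general $x$. The obstacle to invoking Lemma \ref{mainlem} directly is that we do not assume $\grade(I,N)>0$.

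The first step in my plan is to reduce to the positive-grade setting by killing the $I$-torsion of $N$. Set $H:=H^0_I(N)$, $\bar N:=N/H$, and let $\bar M$ be the image of $M$ in $\bar N$. No associated prime of $\bar N$ contains $I$, so $\grade(I,\bar N)>0$, and Lemma \ref{mainlem} produces a constant $t_1$ and a nonempty Zariski open set of elements $x\in I$ for which $I^n\bar M:_{\bar N}x=I^{n-1}\bar M$ whenever $n\geq t_1$. For any $y\in I^n M:_N J$ with $n\geq t_1$ and such an $x$, the inclusion $x\in J$ forces $\bar y\in I^{n-1}\bar M$, and lifting to $N$ produces a decomposition
$$y=z+h, \qquad z\in I^{n-1}M, \ h\in H.$$

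The second and crucial step is to show that, for $n$ sufficiently large, this representative $h$ is annihilated by all of $J$, not merely by $x$. For any $j\in J$, the hypothesis $Jy\subseteq I^n M$ gives $jy\in I^n M$, while $jz\in I\cdot I^{n-1}M\subseteq I^n M$ automatically, so $jh=jy-jz$ lies in $I^n M\cap H$. It therefore suffices to show $I^n M\cap H=0$ for $n\gg 0$. Since $I^n M\subseteq M$, this intersection equals $I^n M\cap H^0_I(M)$; the Artin--Rees lemma applied to $H^0_I(M)\subseteq M$ yields a constant $c$ with
$$I^n M\cap H^0_I(M) \;\subseteq\; I^{n-c}H^0_I(M) \quad \text{for all } n\geq c,$$
and the right-hand side vanishes for $n\geq c+n_0$, where $n_0$ is any integer with $I^{n_0}H^0_I(M)=0$ (such $n_0$ exists because $H^0_I(M)$ is finitely generated and $I$-torsion). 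Setting $t:=\max(t_1,c+n_0)$ then gives $Jh=0$, so $h\in 0:_N J$ and $y\in 0:_N J+I^{n-1}M$.

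The main obstacle I anticipate is conceptual rather than computational: the decomposition $y=z+h$ coming from the reduction modulo $H$ at first controls $h$ only via the single element $x$, not via all of $J$. The key observation is that the full colon hypothesis $Jy\subseteq I^n M$ simultaneously pushes each $jh$ into $H$ (by construction) and into the deep power $I^n M$ (via the colon), whereupon Artin--Rees squeezes their intersection to zero. Once this is noticed, the rest is routine chaining of Lemma \ref{mainlem} with standard finiteness facts about local cohomology.
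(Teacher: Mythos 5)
Your argument is correct and takes essentially the same route as the paper: both pass to $\bar N = N/H^0_I(N)$ to gain positive grade, apply Lemma~\ref{mainlem} there to split any $y \in I^nM:_NJ$ as $y = z + h$ with $z \in I^{n-1}M$ and $h \in H^0_I(N)$, and then use the full hypothesis $Jy \subseteq I^nM$ together with Artin--Rees (forcing $H^0_I(N)\cap I^nM = 0$ for $n\gg 0$) to conclude $Jh = 0$. One wording slip worth fixing: the claim in your first paragraph that ``it will suffice to show $I^nM:_Nx \subseteq 0:_NJ + I^{n-1}M$'' is not what you subsequently prove (nor would it obviously be true, since the crucial step controlling $jh$ uses $jy\in I^nM$ for every $j\in J$, not just $xy\in I^nM$); the reduction you actually carry out is directly to $I^nM:_NJ \subseteq 0:_NJ + I^{n-1}M$.
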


\begin{proof}
Let $L=\Gamma_I(N)= 0:_NI^{\infty}$,  $N'=N/L$ and $M'=(M+L)/L$ (which can be zero modules). Then $L:_NI=L$, in other words $\grade(I,N')>0$, so we can choose $t_1$ such that for any superficial element $x$ (which is also regular on $N$) in $I$ with respect to $N$, the following holds:  $I^nM':_{N'}x=I^{n-1}M'$ for $n\geq t_1$, by Lemma  \ref{mainlem}. 

On the other hand, by Artin-Rees Lemma there is $t_2$ such that $L\cap I^nM=0$ for $n\geq t_2$. Choose $t=\max\{t_1,t_2\}$. For any $n\geq t$
$$0:_NJ = (L:_NJ) \cap  (I^nM:_NJ) = L\cap (I^nM:_NJ)$$
(the second equality holds since $J$ contains a regular element on $N'$, hence $L:_NJ=L$). 

We rewrite $I^nM':_{N'}x=I^{n-1}M'$  as $$(L+I^nM):_Nx =L+I^{n-1}M$$ Let $u\in I^nM:_NJ \subseteq (L+I^nM):_Mx$.  Thus $u = v+w$ with $v\in L$ and $w\in I^{n-1}M$. But then $v \in L\cap I^nM:_NJ = 0:_NJ$, which gives the non-trivial inclusion and proves the desired equality. 
\end{proof}

\begin{remark}
Looking at the proof, one sees that the only place we use $J\subseteq I$ is to show that $v= u-w \in I^nM:_NJ$. So it is enough to assume that $JI^{n-1}M\subseteq I^nM$ for $n\gg0$, in other words $J$ is inside the Ratliff-Rush closure of $I$ (with respect to $M$) (see \cite{N, RR}).  
\end{remark}

\begin{cor}\label{maincor}
Let $R$ be a Noetherian commutative ring. Let $I$ be an ideal of $R$ and $M\subseteq N$ be finitely generated $R$-modules. Then there is a number $t>0$ such that 
$$I^nM:_NI = 0:_NI+I^{n-1}M$$
for each $n\geq t$. 

\end{cor}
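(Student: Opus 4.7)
The inclusion $\supseteq$ is immediate from $I\cdot(0:_N I)=0$ and $I\cdot I^{n-1}M\subseteq I^nM$. For the reverse, set
\[
E_n := (I^nM:_N I)/(0:_N I + I^{n-1}M);
\]
the task is to show $E_n=0$ for $n$ sufficiently large. The plan combines Brodmann's finiteness theorem for associated primes with Theorem \ref{mainthm} applied prime by prime.

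Write $N'':=N/(0:_N I)$ and $M'':=(M+0:_N I)/(0:_N I)\subseteq N''$. A direct kernel chase shows that the natural composition $I^nM:_N I\hookrightarrow N\twoheadrightarrow N''/I^{n-1}M''$ has kernel exactly $0:_N I+I^{n-1}M$, inducing an injection $E_n\hookrightarrow N''/I^{n-1}M''$. From the short exact sequence $0\to M''/I^{n-1}M''\to N''/I^{n-1}M''\to N''/M''\to 0$ and the standard containment $\Ass(B)\subseteq\Ass(A)\cup\Ass(C)$ for a short exact sequence, one obtains $\Ass(E_n)\subseteq\bigcup_n\Ass(M''/I^nM'')\cup\Ass(N''/M'')$. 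By Brodmann's theorem, $\bigcup_n\Ass(M''/I^nM'')$ is a finite set, so there is a single finite $\mathcal A\subseteq\Spec R$ containing $\Ass(E_n)$ for every $n\geq 1$.

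For each $\mathfrak{p}\in\mathcal A$, pass to the faithfully flat extension $R_\mathfrak{p}(X):=R_\mathfrak{p}[X]_{\mathfrak{p} R_\mathfrak{p}[X]}$, which is local with infinite residue field. Theorem \ref{mainthm} taken with $J=I$ furnishes $t(\mathfrak{p})$ such that the claimed equality holds after base change for every $n\geq t(\mathfrak{p})$; since colons commute with flat base change and the extension is faithfully flat, the identity descends to $R_\mathfrak{p}$, i.e.\ $(E_n)_\mathfrak{p}=0$ for all $n\geq t(\mathfrak{p})$.

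Setting $T:=\max_{\mathfrak{p}\in\mathcal A}t(\mathfrak{p})$ (finite because $\mathcal A$ is finite), we obtain for every $n\geq T$ that $\Ass(E_n)\subseteq\mathcal A$ while $(E_n)_\mathfrak{p}=0$ for each $\mathfrak{p}\in\mathcal A$; this forces $\Ass(E_n)=\emptyset$ and hence $E_n=0$, as desired. The main conceptual step is the embedding $E_n\hookrightarrow N''/I^{n-1}M''$ with uniformly bounded associated primes; once that is in place, reduction to the local Theorem \ref{mainthm} via faithful flatness automatically provides the uniform bound in $n$ and sidesteps any delicate Noetherian-induction issues.
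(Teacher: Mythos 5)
Your proof is correct and follows the same overall strategy as the paper: reduce to the local case via localization at a finite set of primes guaranteed by Brodmann's theorem, then pass to a faithfully flat extension with infinite residue field and invoke Theorem \ref{mainthm} with $J=I$. What you add is a careful identification of the relevant finite set of primes. The paper writes that it suffices to check equality at $\mathfrak{p}\in\Ass(Y_n)$ and that $\bigcup Y_n$ is finite, which as written is either a slip or shorthand (localizing a strict containment $Y\subsetneq X$ at $\Ass(Y)$ does not in general detect the inequality); the correct thing is to localize at $\Ass(X_n/Y_n)$, and your embedding $E_n\hookrightarrow N''/I^{n-1}M''$ together with the short exact sequence argument gives exactly the uniform finite control on those associated primes that the paper's appeal to Brodmann is gesturing at. So the route is the same, but your version supplies the missing kernel chase and the precise finite set $\mathcal A$, making the reduction to Theorem \ref{mainthm} airtight.
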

\begin{proof}
Let $X_n, Y_n$ be the left and right hand sides respectively. Clearly $X_n \supseteq Y_n$, so to prove the equality it is enough to prove $(X_n)_{\fp} = (Y_n)_{\fp}$ for each $\fp \in \Ass(Y_n)$. As $S = \cup_{i\geq 1}Y_n$ is finite, see \cite{Br}, we can reduce to the local case (our $t$ will be the maximal of all $t_{\fp}$ that works for each localization at $\fp \in S$. 
Once reduced to the case  $(R,\fm,k)$ local  we can make a faithfully flat extension to assume $k$ is infinite and apply \ref{mainthm} with $J=I$ (note that as $k$ is infinite, general elements exist). 
\end{proof}

\begin{remark}\label{BrR}
The case $M=N$ and $0:_MI=0$ of Corollary \ref{maincor} appeared as Lemma (4) in \cite{Br}, which refers to the proof of \cite[Theorem 4.1]{R}, which was the case when $M=N=R$ and $0:I=0$.
\end{remark}

\begin{cor}\label{coloncor}
Let $R$ be a Noetherian commutative ring and $I,J,K$ be $R$-ideals. Then there is a number $t>0$ such that 
$$(J+I^nK):I = J:I +I^{n-1}K$$
for each $n\geq t$. 

\end{cor}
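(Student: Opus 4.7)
The plan is to deduce this formula directly from Corollary \ref{maincor} by passing to the quotient ring $\bar R = R/J$. I would take $N = \bar R$ and $M = K\bar R \subseteq N$, both finitely generated as $R$-modules (since $K$ is finitely generated, $R$ being Noetherian) and annihilated by $J$. Applying Corollary \ref{maincor} to $R$, the ideal $I$, and the inclusion $M \subseteq N$, we obtain an integer $t$ such that
$$I^n M :_N I \;=\; (0 :_N I) + I^{n-1} M$$
for every $n \geq t$.

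The remaining work is to translate each term in this equality back into an ideal of $R$. Under the correspondence between submodules of $\bar R$ and ideals of $R$ containing $J$, one has $I^n M = (J + I^n K)/J$, so $I^n M :_N I = ((J + I^n K):I)/J$; similarly $0:_N I = (J:I)/J$ and $I^{n-1} M = (J + I^{n-1} K)/J$. Substituting these identifications into the displayed formula and lifting back to $R$ gives
$$(J + I^n K):I \;=\; (J:I) + J + I^{n-1}K,$$
and the summand $J$ is absorbed since $JI \subseteq J$ forces $J \subseteq J:I$, producing the desired equality.

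Since the main content already resides in Corollary \ref{maincor}, there is no substantive obstacle to overcome. The only point that requires a bit of care is confirming that colon and sum in $\bar R$ correspond cleanly to the analogous operations on ideals of $R$ modulo $J$, which is standard bookkeeping.
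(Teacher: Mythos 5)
Your proof is correct and takes exactly the same route as the paper's: the paper's entire proof reads ``We apply \ref{maincor} with $M=(K+J)/J$, $N=R/J$,'' which is precisely your choice of $M = K\bar R$ and $N = \bar R$; you have simply written out the bookkeeping (verifying $I^nM = (J+I^nK)/J$, $0:_N I = (J:I)/J$, and $J \subseteq J:I$) that the paper leaves implicit.
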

\begin{proof}
We  apply \ref{maincor} with $M=(K+J)/J, N=R/J$. 
\end{proof}

\section{Applications}

For an ideal $I$ in a local ring $(R,\fm,k)$ let $\mu(I)$ denote the minimal number of generators of $I$ and $\ord(I) = \max \{t \ |  I\subseteq m^t\}$. 

\begin{definition}\label{fulldef} Let $(R,\fm,k)$ be a local ring. We say that an ideal $I$ of $R$ is 
\begin{enumerate}
\item $\fm$-full if $I\fm:x =I$ for a general $x\in \fm$ (assuming $k$ is infinite). 
\item full if $I:x =I:\fm$ for a general $x\in \fm$ (assuming $k$ is infinite). 
\item weakly $\fm$-full (or basically full) if $I\fm:\fm=I$. 
\item Burch if $I\fm:\fm \neq I:\fm$ (equivalently $I\fm \neq (I:\fm)\fm$). 
\end{enumerate}
\end{definition}

\begin{remark}\label{fullref}
The above types of ideals have been studied by many authors and shown to enjoy remarkable properties. When $\depth R/I=0$, we have $\fm$-{full} $\implies$ weakly $\fm$-full $\implies$ Burch. Burch ideals and their quotients enjoys unexpectedly strong properties (\cite{DKT}).  Weakly $\fm$-full ideals are also called basically full in \cite{HRR} and weakly $\fm$-full in \cite{CIST}. See \cite{B,CIST, DKT, HLNR, HRR, HW,  W, W1, W2} and the references therein for more details. 
\end{remark}

\begin{remark}
Even when $k$ is finite, we can still define $\fm$-fullness or fullness by passing to the faithfully flat extension $S_{\fm S}$ with $S=R[X_1,\dots, X_n]$, $n=\mu(\fm)$. 
\end{remark}

\begin{thm}\label{fullthm}
Let $(R,\fm,k)$ be a local ring and $J, K$  ideals of $R$. Let $(P)$ be one of the properties $\{$$\fm$ -full, weakly $\fm$-full, full$\}$. The following are equivalent 
\begin{enumerate}
\item $J$ is $(P)$. 
\item $J+K\fm^n$ is $(P)$ for $n\gg0$. 
\end{enumerate}
\end{thm}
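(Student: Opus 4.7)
The plan is to use Theorem \ref{mainthm} and Corollary \ref{coloncor} to express each relevant colon on $I_n := J + K\fm^n$ as the corresponding colon on $J$ plus an error term of the form $K\fm^{n-c}$. The forward direction $(1)\Rightarrow(2)$ then reduces to choosing an element general in the intersection of two Zariski opens, while the backward direction $(2)\Rightarrow(1)$ reduces to Krull's intersection theorem.

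The weakly $\fm$-full case involves no general element. Corollary \ref{coloncor} applied with $(J\fm,\fm,K\fm)$ in place of $(J,I,K)$ yields a threshold $t$ such that for all $n\geq t$,
$$I_n\fm : \fm \;=\; J\fm : \fm + K\fm^n.$$
Since $J\subseteq J\fm:\fm$, the identity $I_n\fm:\fm=I_n$ is equivalent to $J\fm:\fm\subseteq J+K\fm^n$. This is automatic if $J$ is weakly $\fm$-full, proving $(1)\Rightarrow(2)$. Conversely, if it holds for all $n\geq n_0$, intersecting over $n\geq\max(n_0,t)$ yields $J\fm:\fm\subseteq\bigcap_n(J+K\fm^n)=J$ by Krull.

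For $\fm$-full (the full case is analogous), after the usual reduction to $k$ infinite, apply Theorem \ref{mainthm} with $I=\fm$, $N=R/J\fm$, $M=(K\fm+J\fm)/J\fm$, and colon-ideal $(x)$, to obtain a threshold $t$ and a non-empty Zariski open $U\subseteq\fm/\fm^2$ such that for every $x\in\fm$ whose class mod $\fm^2$ lies in $U$ and every $n\geq t$,
$$I_n\fm :_R x \;=\; J\fm :_R x + K\fm^n. \qquad(\ast)$$
The identity $I_n\fm:x=I_n$ then reduces to $J\fm:x\subseteq J+K\fm^n$. For $(1)\Rightarrow(2)$, pick $x$ in the intersection of $U$ with the Zariski open witnessing the $\fm$-fullness of $J$ (non-empty since $k$ is infinite); then $J\fm:x=J\subseteq J+K\fm^n$, so $I_n$ is $\fm$-full. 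The full case runs identically with $N=R/J$, $M=(K+J)/J$ (yielding $I_n:x = J:x+K\fm^{n-1}$), paired with $I_n:\fm=J:\fm+K\fm^{n-1}$ from Corollary \ref{coloncor}.

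The main obstacle is the converse for $\fm$-full (and full), because the witnessing element $x$ for each $I_n$ may a priori depend on $n$, obstructing a naive intersection over $n$. I resolve this with the standard generic element trick: work in the faithfully flat extension $R(X)=R[X_1,\dots,X_r]_{\fm R[X]}$ with canonical generic element $y=\sum X_i m_i\in\fm R(X)$, which lies in every non-empty Zariski open of $\fm R(X)/\fm^2 R(X)$ simultaneously. By the remark following Definition \ref{fulldef}, $\fm$-fullness of $I_n$ in $R$ is equivalent to $I_n R(X)\cdot\fm R(X):y = I_n R(X)$, so applying $(\ast)$ in $R(X)$ to $y$ gives $J\fm R(X):y\subseteq JR(X)+K\fm^n R(X)$ for \emph{the same} $y$ and all $n$ sufficiently large. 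Krull's intersection theorem in $R(X)$ then forces $J\fm R(X):y\subseteq JR(X)$, so $J$ is $\fm$-full. The full case follows by the same argument applied to the colons of the previous paragraph.
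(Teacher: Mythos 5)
Your proof is correct and follows essentially the same route as the paper: apply Theorem~\ref{mainthm} (and Corollary~\ref{coloncor}) to rewrite the colons defining property~$(P)$ for $I_n=J+K\fm^n$ in terms of the corresponding colons for $J$ plus the term $K\fm^{n}$ (or $K\fm^{n-1}$), reduce the forward direction to intersecting two Zariski opens, and reduce the converse to Krull's intersection theorem. The one place where you go beyond the paper is that you explicitly handle the possible $n$-dependence of the witnessing general element in the converse via the generic element $y=\sum X_i m_i$ in $R(X)$; the paper's proof treats this as implicit, relying on the standard fact that the colon $J\fm:x$ is the same for all sufficiently general $x$, so the two proofs are the same in substance, with yours supplying that one extra justification.
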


\begin{proof}
We shall give the proof for $(P)=``\fm$-full", the other cases are similar. By \ref{mainthm}, we have for $J_n=J+\fm^n$ and $n\gg0$:
$$J_n\fm:x = J\fm:x + K\fm^{n} $$
So $J_n$ is $\fm$-full for $n\gg0$ is equivalent to $J\fm:x +K\fm^{n} = J+K\fm^n$ for $n\gg0$. Working in $R/J$, this is equivalent to $\overline{J\fm:x} \subseteq \overline{K\fm^{n}}$ for $n\gg0$,  which is  equivalent to $\overline {J\fm:x} =0$, or  $J\fm:x=J$. 
\end{proof}

The below Corollary extends \cite[Theorem 7.2]{HRR}, see Remark \ref{fullref} and \cite[Proposition 3.3 (ii)]{W2}.

\begin{cor}\label{fullcor}
Let $(R,\fm,k)$ be a local ring. The following are equivalent:
\begin{enumerate}
\item $\depth R>0$.
\item $K\fm^n$ is  weakly $\fm$-full for  $n\gg0$ and some ideal $K$. 
\item $K\fm^n$ is weakly $\fm$-full for  $n\gg0$ and any ideal $K$.
\item $K\fm^n$ is  $\fm$-full for  $n\gg0$ and some ideal $K$.
\item $K\fm^n$ is  $\fm$-full for  $n\gg0$ and any ideal $K$.
\end{enumerate}
\end{cor}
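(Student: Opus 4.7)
The overall plan is to apply Theorem \ref{fullthm} with $J = 0$ for each of the two properties $(P)\in\{\fm\text{-full},\ \text{weakly }\fm\text{-full}\}$. This collapses every condition of the form ``$K\fm^n$ is $(P)$ for $n\gg 0$'' into the single condition ``$0$ is $(P)$'', which I will then identify with $\depth R>0$.

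First I would record the trivial implications (5) $\Rightarrow$ (3) $\Rightarrow$ (2) and (5) $\Rightarrow$ (4) $\Rightarrow$ (2); the step from ``$\fm$-full'' to ``weakly $\fm$-full'' used here is immediate from the definitions, because for a general $x\in\fm$ we have $I\subseteq I\fm:\fm\subseteq I\fm:x$, so $I\fm:x=I$ forces $I\fm:\fm=I$. Consequently it suffices to prove, for each of the two $(P)$'s, that (1) is equivalent to ``$K\fm^n$ is $(P)$ for $n\gg 0$ for some $K$'' and also to the ``for any $K$'' version. Both are delivered in one stroke by Theorem \ref{fullthm} with $J=0$ applied to any chosen $K$: it reads ``$K\fm^n$ is $(P)$ for $n\gg 0$ if and only if $0$ is $(P)$'', so the truth of the condition does not depend on the choice of $K$, giving (2) $\Leftrightarrow$ (3) and (4) $\Leftrightarrow$ (5) with both equivalent to ``$0$ is $(P)$''.

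To finish I unwind ``$0$ is $(P)$''. For $(P)=\fm$-full it says $0\fm:x=0:x=0$ for a general $x\in\fm$, which means $\fm$ contains a non-zerodivisor, i.e.\ $\depth R>0$. For $(P)=$ weakly $\fm$-full it says $0:\fm=0$, equivalently $\fm\notin\Ass(R)$, again $\depth R>0$. The main obstacle is really just a small technicality: the definition of $\fm$-fullness nominally assumes $k$ infinite. As in the remark following Definition \ref{fulldef}, one passes to the faithfully flat extension $S_{\fm S}$, under which $\depth$, associated primes of the zero ideal, and the statement of Theorem \ref{fullthm} are all preserved, so nothing essential is lost. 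No deeper difficulty is anticipated beyond keeping the ``some $K$'' vs.\ ``any $K$'' bookkeeping straight.
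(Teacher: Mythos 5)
Your proof is correct and follows the same route as the paper, which simply takes $J=0$ in Theorem \ref{fullthm}; you have merely made explicit the bookkeeping (independence of $K$, the easy implication $\fm$-full $\Rightarrow$ weakly $\fm$-full) and the identification of ``$0$ is $(P)$'' with $\depth R>0$ that the paper leaves to the reader.
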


\begin{proof}
Take $J=0$ in \ref{fullthm}.
\end{proof}

\begin{remark}
The notions in Definition \ref{fulldef} can be extended to submodules, see for instance \cite{HRR} or \cite[Section 8.4.3]{V}. One can use Theorem \ref{mainthm} to derive similar results to \ref{fullthm} and \ref{fullcor}.   

\end{remark}

The following observation involves Burch ideals, which turns out to be rather easy to construct by adding products with $\fm$. 

\begin{prop}
Let $(R,\fm,k)$ be a local ring and $I,J$ be ideals of $R$ such that $J\fm \nsubseteq I\fm$. Then $I+J\fm$ is Burch. In particular, if $\dim R/I>0$ and $\dim R/J=0$, then $I+J\fm$ is Burch. 

\end{prop}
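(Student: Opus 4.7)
Set $A = I + J\fm$. Recall that $A$ is Burch means $A\fm \neq (A:\fm)\fm$, so the plan is to produce an element (or rather, a submodule) lying in $(A:\fm)\fm$ but not in $A\fm$, and to do this under the hypothesis $J\fm \nsubseteq I\fm$.

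The first observation is that $J\fm \subseteq I + J\fm = A$, so $J \subseteq A:\fm$, and therefore $J\fm \subseteq (A:\fm)\fm$. On the other hand $A\fm = I\fm + J\fm^2$. So if $A$ failed to be Burch, i.e. $(A:\fm)\fm = A\fm$, we would in particular have
\[
J\fm \;\subseteq\; A\fm \;=\; I\fm + J\fm^2.
\]

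The main (and really only) step is now a Nakayama argument. Pass to $R/I\fm$ and consider the finitely generated submodule $N := (J\fm + I\fm)/I\fm$ of $R/I\fm$. The displayed inclusion says exactly that $N \subseteq \fm N$, so Nakayama's Lemma forces $N = 0$, i.e.\ $J\fm \subseteq I\fm$, contradicting the hypothesis. Hence $(A:\fm)\fm \neq A\fm$, proving $A$ is Burch.

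For the ``in particular'' clause I will just verify the hypothesis $J\fm \nsubseteq I\fm$ from $\dim R/I > 0$ and $\dim R/J = 0$. Since $\dim R/J = 0$, we have $\fm^n \subseteq J$ for some $n \geq 1$, so $\fm^{n+1} \subseteq J\fm$. If $J\fm \subseteq I\fm$, then $\fm^{n+1} \subseteq I$, forcing $\fm$ to be the unique minimal prime over $I$ and contradicting $\dim R/I > 0$. I expect the Nakayama step to be the only real content; the rest is a direct unpacking of the definitions.
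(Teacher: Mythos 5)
Your proof is correct and is essentially the same as the paper's: both reduce the claim to the inclusion $J\fm \subseteq I\fm + J\fm^2$ and then apply Nakayama modulo $I\fm$ to force $J\fm \subseteq I\fm$, a contradiction. Your verification of the ``in particular'' clause also matches the intended (and in the paper omitted) argument via $\fm^n \subseteq J$ and $\sqrt{I} = \fm$.
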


\begin{proof}
Suppose $I+J\fm$ is not Burch, then $I\fm+J\fm^2 = [(I+J\fm):\fm]\fm \supseteq (I+J)\fm$. Working modulo $I\fm$, we get $\overline {J\fm^2} \subseteq \overline {J\fm}$, so $J\fm\subseteq I\fm$. The second assertion is clear. 
\end{proof}

Because of Theorem \ref{fullthm} and Corollary \ref{fullcor}, it seems reasonable to make:

\begin{definition}\label{ndef}
For an ideal $I$ in a local ring $(R, \fm,k)$ with $\depth R>0$, we define:
$$n_1(I) := \min \{t \geq 0 \ | \ I\fm^n \ \text{is $\fm$-full for all}\  n\geq t \}$$
$$n_2(I) := \min \{t \geq 0 \ | \ I\fm^n \ \text{is  full for all}\  n\geq t \}$$
$$n_3(I) := \min \{t \geq 0 \ | \ I\fm^n \ \text{is weakly $\fm$-full for all}\  n\geq t \}$$

\end{definition}

\begin{remark}
Clearly $n_1(I)\geq \max\{n_2(I), n_3(I)\}$.
\end{remark}

These invariants will be shown to be equal when $R$ is a regular local ring of dimension $2$, see the next section. 

The following observation comes from a question by Neil Epstein.
\begin{prop}
Let $(P)$ be one of the properties $\{$$\fm$ -full, weakly $\fm$-full, full$\}$. Let $\{I_i\}_{i\in X}$ be a family of ideals such that each $I_i$ is $(P)$. Then $\bigcap_{i\in X}I_i$ is $(P)$ (for $\fm$-full or full we need to assume that the cardinality of $X$ is less than that of the residue field $k$). 
\end{prop}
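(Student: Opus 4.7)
The plan is to handle the three cases in parallel, reducing the $\fm$-full and full cases to a common sublemma about simultaneous nonvanishing points in a $k$-vector space. Write $J = \bigcap_{i \in X} I_i$. The weakly $\fm$-full case requires no cardinality hypothesis and is purely formal: since $J\fm \subseteq I_i\fm$ for every $i$, we have $J\fm:\fm \subseteq I_i\fm:\fm = I_i$, and intersecting over $i$ yields $J\fm:\fm \subseteq J$; the reverse inclusion $J \subseteq J\fm:\fm$ holds for any ideal.

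For the $\fm$-full (respectively full) case, the strategy is to produce a single $x \in \fm$ that is simultaneously general for every $I_i$. By hypothesis, each $I_i$ determines a nonempty Zariski open subset $U_i$ of $V := \fm/\fm^2$ such that any $x \in \fm$ whose image lies in $U_i$ satisfies $I_i\fm:x = I_i$ (respectively $I_i:x = I_i:\fm$). Once a common $x$ with image in $\bigcap_{i\in X} U_i$ is found, $\fm$-fullness of $J$ follows from $J\fm:x \subseteq I_i\fm:x = I_i$ (intersect over $i$, and observe that the reverse inclusion is trivial), and fullness of $J$ follows from $J:x \subseteq I_i:x = I_i:\fm$ together with the identity $\bigcap_i (I_i:\fm) = J:\fm$.

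The real content is therefore the sublemma: given a family $\{U_i\}_{i\in X}$ of nonempty Zariski open subsets of a finite-dimensional $k$-vector space $V$ with $|X|<|k|$, the intersection $\bigcap_i U_i$ is nonempty. After shrinking each $U_i$ to a principal open $D(f_i)$ for some nonzero $f_i \in k[x_1,\dots,x_n]$, this reduces to finding $v \in k^n$ with $f_i(v) \neq 0$ for all $i$. I would proceed by induction on $n$. The base case $n=1$ is a cardinality count: each $Z(f_i)$ is finite, so $|\bigcup_i Z(f_i)| \leq \max(\aleph_0, |X|) < |k|$, using that $k$ is infinite (per Definition \ref{fulldef}). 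For the inductive step, expand $f_i = \sum_d g_{i,d}(x_1,\dots,x_{n-1})\, x_n^d$, let $g_i$ denote its nonzero leading coefficient in $x_n$, apply induction to the family $\{g_i\}_{i\in X}$ in $n-1$ variables to find $a \in k^{n-1}$ with $g_i(a) \neq 0$ for all $i$, so that each $h_i(x_n) := f_i(a,x_n)$ is a nonzero univariate polynomial, and conclude by the base case applied to $\{h_i\}$. The main obstacle is precisely this sublemma, which is what forces the cardinality hypothesis on $X$; once it is in hand, the algebra in the previous paragraph closes all three cases.
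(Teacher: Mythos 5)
Your proof is correct and its overall structure matches the paper's. The weakly $\fm$-full case is the same formal colon computation, and the $\fm$-full and full cases reduce to the same sublemma --- that a family of fewer than $|k|$ nonempty Zariski opens in a finite-dimensional $k$-vector space has nonempty intersection --- after which the colon algebra $J\fm:x \subseteq I_i\fm:x = I_i$ (resp.\ $J:x \subseteq I_i:\fm$ together with $\bigcap_i(I_i:\fm)=J:\fm$) finishes exactly as in the paper. The genuine difference is how the sublemma is proved. The paper inducts on $\dim V$ by choosing a single general hyperplane $H$ with $\dim(V_i\cap H)<\dim V_i$ for every $i$ at once; but producing such an $H$ means avoiding, for each $i$, a proper closed locus in the hyperplane space $\PP(V^*)$, which is itself a lower-dimensional instance of the very sublemma being proved and is left implicit. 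You instead induct on the number of variables: specialize $(x_1,\dots,x_{n-1})$ so that the leading $x_n$-coefficient of every $f_i$ is nonzero, then count roots in the resulting one-variable family. This is more self-contained and never leaves the polynomial ring. One harmless slip in your base case: when $X$ is finite and $|k|=\aleph_0$, the displayed chain $\max(\aleph_0,|X|)<|k|$ fails as written, but $\bigcup_i Z(f_i)$ is then literally finite so the conclusion still holds; the paper's assertion that $\bigcup V_i$ ``has cardinality $|X|$'' carries the same small imprecision.
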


\begin{proof}
Suppose each $I_i$ is $\fm$-full and let $U_i\subseteq V=\fm/\fm^2$ be the Zariski open set for which the condition $\fm I_i:x=I_i$ holds when the image of $x$ is in $U_i$.  We claim that  $\bigcap_{i\in X} U_i$ is non-empty (this is where we need the cardinality condition). Let $V_i=V-U_i$, then each $V_i$ has dimension less than $\dim V$. If $\dim V=1$, then $\bigcup V_i$ has cardinality $|X|$, while $|V| = |k|$, so we are done. If $\dim V>1$, one can do induction by taking a general hyperplane $H$ such that  $\dim V_i\cap H<\dim V_i$ for each $i$.

By the above claim, for a general $x$, $\fm(\bigcap_{i\in X}I_i):x \subseteq \fm I_i:x=I_i$ for each $i\in X$. Thus the left hand side is in $\bigcap_{i\in X}I_i$ and we are done. 

For full ideals, we use the existence of general $x$ as above and $(\bigcap_{i\in X}I_i): J = \bigcap_{i\in X}(I_i:J)$. 
The proof for weakly $\fm$-full is simpler as we don't need to use cardinalities. 
\end{proof}

\section{Two dimensional regular local rings}

In this section we focus on the case when $R$ is a regular local ring of dimension two. 
In this case, any ideal $I$ can be written as $I=fJ$ where $J$ is $\fm$-primary, and it is easy to see that $I:x=f(J:x)$ and $I:\fm=f(J:\fm)$. Thus, using the results on $\fm$-primary ideals carefully developed in \cite[Chapter 14]{HS}, we see that: 

\begin{prop}\label{fullfacts}
Let $(R,\fm,k)$ be a regular local ring of dimension two and $I$ be an ideal. Write $I=fJ$ where $J$ is $\fm$-primary. The following are equivalent:
\begin{enumerate}
\item $I$ is $\fm$-full. 
\item $I$ is full. 
\item $J$ is $\fm$-full. 
\item $J$ is full. 
\item $\mu(J) = \ord(J)+1$.
\end{enumerate}
\end{prop}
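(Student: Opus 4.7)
The plan is to reduce from $I$ to the $\fm$-primary ideal $J$ in the factorization $I = fJ$, and then invoke the classical theory of $\fm$-primary ideals in two-dimensional regular local rings developed in \cite[Chapter 14]{HS}.

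For the reduction, the key point is that a general $x \in \fm$ is coprime to $f$. Since $R$ is a two-dimensional regular local ring, hence a UFD, one can write $f$ (up to a unit) as a product of finitely many prime elements, and a general $x \in \fm$ avoids the finitely many principal primes dividing $f$. Using this coprimality together with the fact that $f$ is a nonzerodivisor, I would verify the three identities
\[
I : x = f(J : x), \quad I\fm : x = f(J\fm : x), \quad I : \fm = f(J : \fm).
\]
Cancelling $f$ then turns $I\fm : x = I$ into $J\fm : x = J$, and $I : x = I : \fm$ into $J : x = J : \fm$, giving $(1)\Leftrightarrow(3)$ and $(2)\Leftrightarrow(4)$.

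It remains to prove $(3)\Leftrightarrow(4)\Leftrightarrow(5)$ for the $\fm$-primary ideal $J$. The implication $(3)\Rightarrow(4)$ is a general fact not specific to dimension two: if $J\fm : x = J$ and $a \in J : x$, then $axy \in J\fm$ for every $y \in \fm$, so $ay \in J\fm : x = J$, whence $a \in J : \fm$; the reverse inclusion is automatic since $x \in \fm$. For the remaining equivalences I would cite \cite[Chapter 14]{HS}: the classical theory there shows that for an $\fm$-primary ideal $J$ in a two-dimensional regular local ring, being $\fm$-full is equivalent to being full, and both are equivalent to the numerical condition $\mu(J) = \ord(J) + 1$, via length formulas relating $\mu(J)$, $\ord(J)$, and $\len((J:\fm)/J)$.

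The main obstacle is the substantive classical direction, $\mu(J) = \ord(J) + 1 \Rightarrow J$ is $\fm$-full; this rests on the structure theory of integrally closed (and $\fm$-full) ideals in two-dimensional regular local rings and would be handled by citation rather than reproof. The only genuinely new argument here is the reduction step, which is elementary once one recognizes the coprimality of a general $x$ with $f$ in the UFD $R$.
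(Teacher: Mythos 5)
Your proposal is correct and takes essentially the same approach as the paper: reduce to the $\fm$-primary ideal $J$ via the colon identities $I:x = f(J:x)$, $I\fm:x = f(J\fm:x)$, $I:\fm = f(J:\fm)$ (valid for general $x$ coprime to $f$), and then invoke \cite[Chapter 14]{HS} for the equivalences among $(3)$, $(4)$, $(5)$ for $\fm$-primary ideals. You supply a bit more detail than the paper -- the UFD coprimality argument and the direct proof of $(3)\Rightarrow(4)$ -- but these are just explicit versions of what the paper treats as ``easy to see.''
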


A crucial and interesting result in dimension two is that the product of two full ideals is full. However, even in this situation, the sum of two full ideals may not be full. For instance, take $I=(x^2), J=(y^2)$ or $I=(x^2,xy^2,y^3)$ and $J=(x^3,x^2y,y^2)$. We shall establish a precise condition for when the sum of two full ideal is full. 

\begin{prop}\label{dim2colon}
Let $(R,\fm,k)$ be a regular local ring of dimension two and $I,J$ are nonzero ideals. The following are equivalent. 
\begin{enumerate}
\item $(I+J):x = (I:x)+(J:x)$ for a general $x\in \fm$.
\item $\ord(I\cap J) =\max\{\ord(I),\ord(J)\}$.
\end{enumerate}

\end{prop}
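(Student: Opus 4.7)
My plan is to reduce (1) to an ideal identity inside the DVR $\bar R := R/xR$ obtained by quotienting out a general parameter $x\in\fm$, and then exploit that ideals in a DVR are powers of the uniformizer, so that the identity becomes a numerical statement about orders.

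The first step is the reduction
$$(I+J):x = (I:x)+(J:x) \quad \Longleftrightarrow \quad \bar I \cap \bar J = \overline{I\cap J} \text{ in } \bar R.$$
One way to see this is via the Tor long exact sequence obtained by applying $-\otimes_R R/xR$ to the Mayer--Vietoris sequence $0 \to R/(I\cap J) \to R/I \oplus R/J \to R/(I+J) \to 0$: the connecting map identifies $((I+J):x)/((I:x)+(J:x))$ with $((I+xR) \cap (J+xR))/((I\cap J) + xR)$. A direct computation also works: given $xu = a+b$ with $a\in I$, $b\in J$, one checks that $u \in (I:x)+(J:x)$ if and only if there is $c \in I\cap J$ with $a - c \in xR$ (one direction splits $a = c + xu_1$, $b = -c + xu_2$; the other sets $c := a - xu_1$, which automatically lies in $I\cap J$).

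The second step is that, for general $x \in \fm \setminus \fm^2$, $\bar R$ is a DVR and
$$\ord_{\bar R}(\bar K) = \ord_R(K) \qquad \text{for each } K \in \{I,\,J,\,I\cap J\}.$$
The inequality $\geq$ is automatic. For $\leq$, fix for each such $K$ an element $r_K \in K$ achieving $\ord_R(r_K) = \ord_R(K)$; its leading form in $\gr_\fm(R) = k[X,Y]$ is a nonzero binary form $F_K$ of degree $\ord_R(r_K)$. The order of $\bar r_K$ in $\bar R$ strictly exceeds $\ord_R(r_K)$ precisely when the class of $x$ in $\fm/\fm^2$ is a root of $F_K$ in $\PP^1_k$, giving finitely many bad directions. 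Since $k$ is infinite, a non-empty Zariski open subset of $\fm/\fm^2$ avoids all of them, for all three ideals simultaneously. This genericity step is the main technical content; the rest is essentially formal.

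Finally, in $\bar R$ with uniformizer $\bar t$, every ideal is a power of $(\bar t)$, so $\bar I = (\bar t^{\ord I})$, $\bar J = (\bar t^{\ord J})$, $\overline{I\cap J} = (\bar t^{\ord(I\cap J)})$, and therefore $\bar I \cap \bar J = (\bar t^{\max(\ord I,\,\ord J)})$. The identity $\bar I \cap \bar J = \overline{I\cap J}$ is thus equivalent to $\ord(I\cap J) = \max(\ord I, \ord J)$, which is (2); note that the inequality $\ord(I\cap J) \geq \max(\ord I,\ord J)$ is free from $I\cap J \subseteq I, J$, so only the reverse content is nontrivial.
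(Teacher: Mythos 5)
Your proof is correct and follows essentially the same route as the paper: both hinge on tensoring the Mayer--Vietoris sequence $0 \to R/(I\cap J) \to R/I \oplus R/J \to R/(I+J) \to 0$ with $R/xR$ and tracking when the relevant Tor map is onto, together with the genericity fact that $\ord_{R/xR}(\bar K) = \ord_R(K)$ for general $x$. The only cosmetic difference is in the finish: the paper counts lengths in the short exact sequence of Artinian modules over the DVR $R/xR$ and uses $\ord(I+J) = \min\{\ord I, \ord J\}$, whereas you phrase the same computation as comparing $\bar I \cap \bar J$ with $\overline{I\cap J}$ as powers of the uniformizer, which yields $\max$ directly — the two are interchangeable since $\len(\bar R/\bar K) = \ord_{\bar R}(\bar K)$ in a DVR.
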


\begin{proof}
Consider the exact sequence $\ses{\frac{R}{I\cap J}}{\frac{R}{I}\oplus \frac{R}{J}}{\frac{R}{I+J}}$. Then quite generally, (1) is equivalent to the surjectivity of the map induced when tensoring the sequence with $R/xR$:
$$\Tor_1^R(R/I\oplus R/J, R/xR) = (I:x)/I\oplus (J:x)/J \to \Tor_1^R(R/(I+J), R/xR) = (I+J):x/(I+J) $$
Hence, (1) is equivalent to the exactness of $$\ses{\frac{R}{(I\cap J,x)}}{\frac{R}{(I,x)}\oplus \frac{R}{(J,x)}}{\frac{R}{(I+J,x)}}$$
Note that since we are in dimension two and $R$ is regular, $\len(R/(I,x)) = \ord(I)$ and $\ord(I+J) = \min \{\ord(I), \ord(J)\}$, so we are done. 
\end{proof}

\begin{cor}\label{dim2cor}
Let $I,J$ be full ideals such that $\ord(I\cap J) =\max\{\ord(I),\ord(J)\}$. Then $I+J$ is full. In particular $I+\fm^a$ is full for any $a$. 
\end{cor}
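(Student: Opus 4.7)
The plan is to chain Proposition \ref{dim2colon} with the definition of fullness. First I would pass to the standard faithfully flat extension if necessary so that $k$ is infinite and ``general'' makes sense. Each of the conditions $I:x=I:\fm$ and $J:x=J:\fm$ holds for $x$ in a Zariski-open subset of $V=\fm/\fm^2$, and the hypothesis $\ord(I\cap J)=\max\{\ord(I),\ord(J)\}$ makes the conclusion of Proposition \ref{dim2colon} hold on another open subset. I would pick $x\in \fm$ general for the intersection of these three sets.

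Then I would write down the chain
\[
(I:\fm)+(J:\fm)\ \subseteq\ (I+J):\fm\ \subseteq\ (I+J):x\ =\ (I:x)+(J:x)\ =\ (I:\fm)+(J:\fm),
\]
in which the first two containments are formal, the middle equality is Proposition \ref{dim2colon}, and the final equality uses fullness of $I$ and of $J$. All inclusions collapse to equalities, so in particular $(I+J):x=(I+J):\fm$, which is exactly the fullness of $I+J$.

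For the ``in particular'' assertion, I would first note that $\fm^a$ is full by Proposition \ref{fullfacts}(5), since $\mu(\fm^a)=a+1=\ord(\fm^a)+1$. What remains is to verify the hypothesis $\ord(I\cap \fm^a)=\max\{\ord(I),a\}$. The inequality $\geq$ is automatic from $I\cap \fm^a\subseteq I$ and $I\cap \fm^a\subseteq \fm^a$. For the reverse, take $r\in I$ with $\ord(r)=\ord(I)$: if $a\leq \ord(I)$ then $r$ itself already lies in $I\cap \fm^a$, and if $a>\ord(I)$ one multiplies $r$ by any $s\in \fm^{a-\ord(I)}$ of minimal order, using that $\gr_\fm R$ is a polynomial ring (hence a domain) so that $\ord(rs)=\ord(r)+\ord(s)=a$ and $rs\in I\cap \fm^a$.

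The proof is essentially mechanical; the only point requiring a small amount of care is the open-set-intersection at the start (which is routine once $k$ is infinite) and the additivity of $\ord$ on products in the last step, which is standard for regular local rings.
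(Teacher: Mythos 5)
Your proof is correct and follows essentially the same approach as the paper's: apply Proposition \ref{dim2colon} to identify $(I+J):x$ with $(I:x)+(J:x)$, use fullness of $I$ and $J$ to collapse this into $(I+J):\fm$, and then verify the order hypothesis for $I\cap\fm^a$ by a short computation. The only differences are cosmetic — you write a closed chain of inclusions where the paper shows the single nontrivial containment $(I+J):x\subseteq(I+J):\fm$, and you verify $\ord(I\cap\fm^a)=\max\{\ord(I),a\}$ by an element-level argument via $\gr_\fm R$ being a domain, where the paper instead sandwiches $I\fm^{a-\ord(I)}\subseteq I\cap\fm^a\subseteq\fm^a$; you also helpfully make explicit that $\fm^a$ is full, a point the paper leaves implicit.
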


\begin{proof}
Then by  \ref{dim2colon}, for a general $x$:
$$(I+J):x = (I:x) +(J:x) \subseteq (I:\fm)+(J:\fm) \subseteq (I+J):\fm $$
which is all we need.  

For the last assertion, we need to show that $\ord(I\cap \fm^a) = \max \{\ord(I),a\}$. If $\ord(I)\geq a$ then $I\cap \fm^a =I$. If $b=\ord(I)< a$, then $I\fm^{a-b}\subseteq I\cap \fm^a \subseteq\fm^a$  which forces the desired equality. 

\end{proof}

Finally, we prove the equality of the  invariants defined in \ref{ndef} in this special case. 

\begin{prop}
If $R$ is regular local of dimension $2$, then $n_1(I)= n_2(I)=n_3(I)$ for each ideal $I$. 
\end{prop}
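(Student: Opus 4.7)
The plan is to establish $n_1 \leq n_2$ and $n_1 \leq n_3$, which together with the already-noted $n_1 \geq \max\{n_2, n_3\}$ from the remark after Definition~\ref{ndef} yield equality. The first inequality is immediate from Proposition~\ref{fullfacts}: in this setting, $\fm$-fullness and fullness coincide for every ideal, so the stabilization times agree and $n_1 = n_2$.

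For $n_1 \leq n_3$, first reduce to the $\fm$-primary case. Writing $I = fJ$ with $J$ $\fm$-primary, the fact that $(f) :_R \fm = (f)$ (because $R/(f)$ is a $1$-dimensional Cohen--Macaulay ring, so $\fm$ is a nonzerodivisor modulo $(f)$) together with $f$ being a nonzerodivisor on $R$ gives $(fK) :_R \fm = f(K :_R \fm)$ for every ideal $K$. Applying this with $K = J\fm^{n+1}$ shows that $I\fm^n$ is weakly $\fm$-full iff $J\fm^n$ is; combined with Proposition~\ref{fullfacts} the analogous reduction holds for $\fm$-fullness. Hence $n_i(I) = n_i(J)$ for $i \in \{1,3\}$, and I may assume $I = J$ is $\fm$-primary.

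Introduce the defect $a_n := \mu(J\fm^n) - (\ord(J) + n + 1)$. Since $\ord$ is the $\fm$-adic valuation, $\ord(J\fm^n) = \ord(J) + n$, and Proposition~\ref{fullfacts} gives $a_n \leq 0$ with equality iff $J\fm^n$ is $\fm$-full. By the product theorem for full ideals cited in the paper, $\fm$-fullness of $J\fm^n$ propagates to $J\fm^{n+1} = (J\fm^n)\cdot\fm$, so $n_1(J) = \min\{n \geq 0 : a_n = 0\}$. Because $R$ is Gorenstein of dimension $2$, the Hilbert--Burch socle formula gives $\dim_k \soc(R/J\fm^{n+1}) = \mu(J\fm^{n+1}) - 1$, and together with the chain of $k$-subspaces $J\fm^n/J\fm^{n+1} \subseteq (J\fm^{n+1} :_R \fm)/J\fm^{n+1} = \soc(R/J\fm^{n+1})$ this yields
\[ \mu(J\fm^{n+1}) - 1 \geq \mu(J\fm^n), \]
with equality iff $J\fm^{n+1} :_R \fm = J\fm^n$, i.e., iff $J\fm^n$ is weakly $\fm$-full.

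Rewriting, $a_{n+1} - a_n \geq 0$ with equality iff $J\fm^n$ is weakly $\fm$-full. Since $a_n = 0$ for all large $n$ by Corollary~\ref{fullcor}, the smallest index $n$ at which $a_n = 0$ must be preceded (when $n \geq 1$) by a strict jump $a_{n-1} < a_n$, so $J\fm^{n-1}$ necessarily fails to be weakly $\fm$-full. This gives $n_3(J) \geq n_1(J)$; the case $n_1(J) = 0$ is handled directly by iterating the product theorem, which makes every $J\fm^n$ $\fm$-full, hence weakly $\fm$-full. The main step requiring care is synchronizing the defect $a_n$ with both invariants via the socle formula and the product theorem; the rest is bookkeeping.
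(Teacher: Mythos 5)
Your proof is correct and takes essentially the same route as the paper: both hinge on the numerical criteria that a full $\fm$-primary ideal $J$ has $\mu(J)=\ord(J)+1$ and a weakly $\fm$-full one has $\mu(J\fm)=\mu(J)+1$, then compare $\mu$ and $\ord$ along the chain $J\fm^n$. You supply some supporting detail (the reduction to the $\fm$-primary case, the Hilbert--Burch socle formula, and the product theorem to get monotonicity of the defect $a_n$) that the paper leaves implicit, but the core bookkeeping identifying $n_1$ with $n_3$ is the same.
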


\begin{proof}
Since being $\fm$-full and full are equivalent in this case, it suffices to prove $n_1(I)=n_3(I)$. Let $a=n_1(I)$ and $b=n_3(I)$. Since being weakly $\fm$-full is equivalent to $\mu(\fm I)=\mu(I)+1$. we have that $\mu(I\fm^b)=\mu(I\fm^a)+b-a$. However, as $I\fm^b$ is full, we have $\mu(I\fm^b)=\ord(I\fm^b)+1 = \ord(I\fm^a)+b-a+1$. So $\mu(I\fm^a)=  \ord(I\fm^a)+1$, showing that $I\fm^a$ is full already. 
\end{proof}

\begin{example}
Let $I=(x^a,y^a)\subset R=k[[x,y]]$. It is easy to show (e.g. using \ref{fullfacts}) that $I\fm^n$ is $\fm$-full if and only if $n\geq a-1$, so $n_i(I)=a-1$ for all $i\in \{1,2,3\}$.  
\end{example}

\begin{question}
Can we find good lower and upper bounds for $n_i(I)$? Even when $R$ is regular and $2$-dimensional, it is not clear to the author how to do this. 
\end{question}

\end{document}